\theoremstyle{plain}
\newtheorem{thm}{Theorem}[section]
\newtheorem{prop}[thm]{Proposition}
\newtheorem{cor}[thm]{Corollary}
\newtheorem{lem}[thm]{Lemma}
\theoremstyle{definition}
\newtheorem{ex}[thm]{Example}
\theoremstyle{remark}
\newtheorem{remark}[thm]{Remark}
\newcommand{\R}{\mathbb{R}}
\newcommand{\bR}{\mathbb{R}}
\newcommand{\Z}{\mathbb{Z}}
\newcommand{\F}{\mathbb{F}}
\newcommand{\bpf}{\begin{proof}}
\newcommand{\epf}{\end{proof}}
\newcommand{\bea}{\begin{eqnarray*}}
\newcommand{\eea}{\end{eqnarray*}}
\DeclareMathOperator{\Hom}{Hom}
\DeclareMathOperator{\supp}{supp}
\def\<{\langle}
\def\>{\rangle}
\title[Diagonal splittings and unimodularity]{Diagonal splittings of toric varieties and unimodularity}
\author{Jed Chou}
\email{jedchou1@illinois.edu}
\address{Department of Mathematics \\ University of Illinois \\ Urbana, IL 61801 USA}
\author{Milena Hering}
\email{m.hering@ed.ac.uk}
\address{School of Mathematics \\ University of Edinburgh \\ Edinburgh, EH9 3JZ, UK}
\author{Sam Payne}
\email{sam.payne@yale.edu}
\address{Department of Mathematics \\ Yale University \\ New Haven, CT 06511, USA}
\author{Rebecca Tramel}
\email{rtramel@illinois.edu}
\address{Department of Mathematics \\ University of Illinois \\ Urbana, IL 61801 USA}
\author{Ben Whitney}
\email{ben\_whitney@brown.edu}
\address{Division of Applied Mathematics \\ Brown University \\ Providence, RI 02912 USA}
\begin{document}
\begin{abstract}
We use a polyhedral criterion for the existence of diagonal splittings to investigate which toric varieties $X$ are diagonally split.  
Our results are stated in terms of the vector configuration given by primitive generators of the 1-dimensional cones in the fan defining $X$.  
We show, in particular, that $X$ is diagonally split at all $q$ if and only if this configuration is unimodular, and $X$ is not diagonally split at any $q$ if this configuration is not $2$-regular.
We also study implications for the possibilities for the set of $q$ at which a toric variety $X$ is diagonally split.  
\end{abstract}
\maketitle

\vspace{-20 pt} 
\section{Introduction}

Toric varieties over fields of positive characteristic are Frobenius split, and even globally $F$-regular \cite{Smith00}, and the Frobenius morphisms of toric varieties are defined over $\Z$, leading to a well-behaved notion of splittings and diagonal splittings of toric varieties at an arbitrary integer $q \geq 2$, with all of the usual formal properties \cite{BrionKumar05, diagonalsplittings}.  Here we say that a variety $X$ is diagonally split if there exists a splitting 
of $X\times X$ that is compatible with the diagonal $\Delta\subset X\times X$. 
If $X$ is diagonally split at some $q \geq 2$, then every ample line bundle on $X$ is very ample and even normally generated.   We recall that the existence of diagonal splittings on a toric variety is controlled by the vector configuration given by the primitive generators of the 1-dimensional cones in the corresponding fan, as follows.

Let $N \cong \Z^n$ be a lattice, let $\Sigma$ be a complete fan in $N_\R = N \otimes_Z \R$, and let $X = X(\Sigma)$ be the corresponding toric variety.  We write $\Sigma(1)$ for the set of primitive generators in $N$ of 1-dimensional cones in $\Sigma$, and $M = \Hom(N, \Z)$ for the dual lattice.  By \cite[Theorem~1.2]{diagonalsplittings}, $X$ is diagonally split at $q \geq 2$ if and only if the open polytope in $M_\R = M \otimes_Z R$
\[
\F_\Sigma = \{ u \in M_\R \ | \ |\< u, v \>| < 1 \mbox{ for all } v \in \Sigma(1) \}
\]
contains representatives of every equivalence class in $\frac{1}{q}M/M$, where $\Sigma(1)$ denotes the set of primitive generators of the $1$-dimensional cones in $\Sigma$.

The main purpose of this paper is to give efficient criteria for determining whether a toric variety is diagonally split in terms of basic properties of $\Sigma(1)$, and studying implications for the set of $q \geq 2$ such that $X$ is diagonally split at $q$.  Our main results are as follows.

Recall that $\Sigma(1)$ is said to be \emph{unimodular} if every maximal independent subset generates the lattice $N$.  

\begin{thm} \label{thm:unimodular}
If $\Sigma(1)$ is unimodular, then $X$ is diagonally split at $q$ for all $q \geq 2$.
\end{thm}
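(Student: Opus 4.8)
The plan is to invoke the criterion of \cite[Theorem~1.2]{diagonalsplittings}: it suffices to show that for every $q \geq 2$ the open polytope $\F_\Sigma$ contains a representative of every class of $\frac{1}{q}M/M$. In fact I will prove the stronger and cleaner statement that $M + \F_\Sigma = M_\R$, i.e. that \emph{every} point of $M_\R$ is congruent modulo $M$ to a point of $\F_\Sigma$. This suffices for all $q$ at once: if $w \in \frac{1}{q}M$ and $w - m \in \F_\Sigma$ with $m \in M$, then $w - m \in \frac{1}{q}M$ as well, so it represents the class of $w$.

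The first step is to reduce the claim to a statement about a single totally unimodular matrix (one every square submatrix of which has determinant in $\{0,\pm1\}$). Since $\Sigma$ is complete, $\Sigma(1)$ spans $N_\R$, so it contains a maximal independent subset $B = \{v_1,\dots,v_n\}$, which by the unimodularity hypothesis is a $\Z$-basis of $N$. Let $A$ be the matrix whose columns are the vectors of $\Sigma(1)$ written in some basis of $N$, let $B$ also denote the submatrix of columns $v_1,\dots,v_n$, and set $C = B^{-1}A$. Then $C$ is an integer matrix (each $v \in \Sigma(1)$ is an integer combination of $v_1,\dots,v_n$), it has the columns of the $n\times n$ identity matrix among its columns (those indexed by $v_1,\dots,v_n$), and every maximal minor of $C$ equals $\pm1$ times a maximal minor of $A$, hence lies in $\{0,\pm1\}$ because $\Sigma(1)$ is unimodular. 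Since $C$ contains the identity columns, it is then totally unimodular as soon as its maximal minors lie in $\{0,\pm1\}$ (cofactor expansion of any square submatrix along its identity columns reduces it, up to sign, to a maximal minor of $C$ or to $0$). Passing to the basis of $M$ dual to $B$ identifies $M_\R \cong \R^n$ and $M \cong \Z^n$ so that $\langle u, v\rangle = \mathbf{u}\cdot c_v$, where $\mathbf{u}\in\R^n$ is the coordinate vector of $u \in M_\R$ and $c_v$ is the column of $C$ indexed by $v \in \Sigma(1)$. The goal becomes: for every $\mathbf{w}\in\R^n$ there is $\mathbf{z}\in\Z^n$ with $|(\mathbf{w}-\mathbf{z})\cdot c_v| < 1$ for all $v \in \Sigma(1)$.

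The second step is an integral-polyhedron argument. Fix $\mathbf{w}$. For each $v$ the set of integers $k$ with $|k - \mathbf{w}\cdot c_v| < 1$ is a nonempty block of consecutive integers $\{\ell_v,\dots,u_v\}$ with $u_v - \ell_v \in \{0,1\}$, and one checks directly that $\ell_v \leq \mathbf{w}\cdot c_v \leq u_v$. Consider the polyhedron $P = \{\mathbf{z}\in\R^n : \ell_v \leq c_v\cdot\mathbf{z}\leq u_v \text{ for all } v\in\Sigma(1)\}$. Its defining inequality system has coefficient matrix $\binom{C^{T}}{-C^{T}}$, which is totally unimodular since $C$ is, and integral right-hand side; hence $P$ is an integral polyhedron. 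It is nonempty because $\mathbf{w}\in P$, so it contains an integer point $\mathbf{z}$. For this $\mathbf{z}$ and every $v$, the integer $c_v\cdot\mathbf{z}$ lies in $\{\ell_v,\dots,u_v\}$, so $|(\mathbf{w}-\mathbf{z})\cdot c_v| < 1$. Translating $\mathbf{z}$ to the corresponding lattice point $m \in M$ gives $w - m \in \F_\Sigma$, which proves $M + \F_\Sigma = M_\R$ and hence the theorem.

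The step I expect to be the real obstacle is the passage to total unimodularity: one must notice that, after changing to coordinates given by a basis contained in $\Sigma(1)$, the configuration matrix is not merely ``unimodular'' in the weak sense that its maximal minors lie in $\{0,\pm1\}$, but genuinely totally unimodular — equivalently, that $\Sigma(1)$ being unimodular is exactly the statement that the associated matroid is regular. After that the argument only uses the standard fact that a totally unimodular system with integral right-hand side defines an integral polyhedron. A minor but necessary point is the bookkeeping that makes the bounds $\ell_v, u_v$ integers and keeps $\mathbf{w}$ itself inside $P$, so that $P$ is simultaneously integral and nonempty.
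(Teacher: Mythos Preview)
Your proof is correct and follows essentially the same route as the paper: both arguments place the given point in the polytope $P=\{u:\lfloor\langle a,v\rangle\rfloor\le\langle u,v\rangle\le\lceil\langle a,v\rangle\rceil\}$ and use (total) unimodularity to extract an integer point, whose difference with $a$ lands in $\F_\Sigma$. The only distinction is that you spell out the passage from ``unimodular configuration'' to ``totally unimodular matrix'' and invoke the standard TU--integrality theorem directly, whereas the paper compresses this into a single citation of \cite[Theorem~19.3]{Schrijver86}; your slight strengthening to $M+\F_\Sigma=M_\R$ is immediate and costs nothing.
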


\begin{remark}
The vector configuration $\Sigma(1)$ is unimodular if and only if there is a choice of coordinates $N \cong \Z^n$ such that the matrix $A$ whose columns are the vectors in $\Sigma(1)$ is \emph{totally unimodular}, meaning that the determinant of any square submatrix is in $\{ -1, 0, 1 \}$.  Such matrices are well-studied from many points of view, including those of integer programming and matroid theory; see \cite[Chapters~19-20]{Schrijver86} for details.  Totally unimodular matrices have several equivalent characterizations, and there is a polynomial time algorithm for determining whether a matrix is totally unimodular.  It therefore follows from Theorem~\ref{thm:unimodular} that there is a polynomial time algorithm for determining whether $X$ is diagonally split at $q = 2$.  
\end{remark}

For $q = 2$, we have the following converse.

\begin{thm} \label{thm:q=2}
If $\Sigma(1)$ is not unimodular, then $X$ is not diagonally split at $q = 2$.
\end{thm}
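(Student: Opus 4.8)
The plan is to derive this from the polyhedral criterion together with a short lemma about sublattices of $\Z^n$. Since $\Sigma(1)$ is not unimodular, I would first fix a maximal linearly independent subset $\{v_1,\dots,v_n\}\subseteq\Sigma(1)$ whose $\Z$-span $N'$ is a proper sublattice of $N$, say of index $d\geq 2$, and pass to the dual basis $u_1,\dots,u_n\in M_\R$. In these coordinates $M$ sits inside $\Z^n=\Z u_1\oplus\cdots\oplus\Z u_n$ with index $d$, and since $v_1,\dots,v_n\in\Sigma(1)$ the polytope $\F_\Sigma$ is contained in the open cube $\{\,u : |\langle u,v_i\rangle|<1\text{ for all }i\,\}$, which is $(-1,1)^n$ in these coordinates. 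A short computation, clearing the factor of $\tfrac12$, shows that a class $\tfrac12 m+M$ with $m\in M$ has a representative in this cube precisely when the coset $m+2M$ inside $M$ meets the box $\{-1,0,1\}^n$. So by \cite[Theorem~1.2]{diagonalsplittings} it is enough to exhibit a single coset of $2M$ inside $M$ that avoids $\{-1,0,1\}^n$.

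To produce such a coset I would split on the parity of $d$. If $d$ is even, then $\Z^n/M$ has nontrivial $2$-torsion, equivalently $2M\subsetneq M\cap 2\Z^n$; choosing $x_0\in(M\cap 2\Z^n)\setminus 2M$, the coset $x_0+2M$ lies entirely in $2\Z^n$, so the only vector of $\{-1,0,1\}^n$ it could contain is the origin, and it does not, because $x_0\notin 2M$. If $d$ is odd, then $\gcd(d,2^n)=1$ forces $M+2\Z^n=\Z^n$; since $M\subsetneq\Z^n$, some basis vector $u_i$ lies outside $M$ (otherwise $M$ would be everything), and I would pick $m_1\in M$ with $m_1\equiv u_i\pmod{2\Z^n}$. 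Every element of $m_1+2M$ is congruent to $u_i$ modulo $2$, so the only candidates for membership in $\{-1,0,1\}^n$ are $\pm u_i$, and these are not in $M$; hence this coset avoids the box.

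The hard part is the lemma about cosets of $2M$, and in particular recognizing that the two parities of $d$ genuinely require different treatments: for even $d$ the obstruction is the $2$-torsion of $\Z^n/M$, which pushes a whole coset into $2\Z^n$, whereas for odd $d$ one instead exploits that $\{-1,0,1\}^n$ contains very few vectors of any prescribed nonzero parity class. The passage from the polyhedral criterion to this combinatorial statement is routine once the dual coordinates are fixed, so I expect essentially all of the content to live in this lemma.
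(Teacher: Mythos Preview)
Your argument is correct and complete; the reduction to finding a coset of $2M$ in $M\subset\Z^n$ that misses $\{-1,0,1\}^n$ is clean, and both parity cases go through as you describe.

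The paper takes a different, somewhat shorter route. Instead of working in the dual basis $u_1,\dots,u_n$ and splitting on the parity of $d=[N:N']$, it first puts the matrix with columns $v_1,\dots,v_n$ into the Hermite normal form of Section~\ref{sec:Hermite}, so that with respect to a basis $e_1,\dots,e_n$ of $N$ (dual basis $f_1,\dots,f_n$ of $M$) the last diagonal entry $B_{nn}$ is at least $2$. The support Lemma~\ref{lem:support} then forces any representative of $[f_n/2]$ in $\F_\Sigma$ to lie in $\frac{1}{2}\Z f_n$, hence to be an odd half-integer multiple of $f_n$; pairing with $v_n$ gives absolute value at least $B_{nn}/2\geq 1$, a contradiction. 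Thus the paper manufactures the bad class directly from the normal form, with no case analysis. What you gain is self-containment: your proof does not rely on the Hermite setup or on Lemma~\ref{lem:support}, only on elementary facts about finite abelian groups. What the paper gains is uniformity---the same normal-form/support-lemma machinery is reused for Theorems~\ref{thm:notsplit} and~\ref{thm:n=2}, so the investment pays off several times over.
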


Combining Theorems~\ref{thm:unimodular} and \ref{thm:q=2} gives the following equivalent characterizations of toric varieties that are diagonally split at $q = 2$.

\begin{cor}
The following are equivalent:
\begin{enumerate}
\item $\Sigma(1)$ is unimodular.
\item $X$ is diagonally split at $q$ for all $q \geq 2$.
\item $X$ is diagonally split at some even $q \geq 2$.
\item $X$ is diagonally split at $q = 2$.
\end{enumerate}
\end{cor}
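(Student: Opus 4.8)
The plan is to prove the four conditions equivalent by establishing the cycle $(1) \Rightarrow (2) \Rightarrow (3) \Rightarrow (4) \Rightarrow (1)$, with essentially all of the substantive work already carried by Theorems~\ref{thm:unimodular} and \ref{thm:q=2}. Indeed, $(1) \Rightarrow (2)$ is exactly Theorem~\ref{thm:unimodular}; the implication $(2) \Rightarrow (3)$ is trivial, since $q = 2$ is even; and $(4) \Rightarrow (1)$ is the contrapositive of Theorem~\ref{thm:q=2}. So the only link not yet in hand is $(3) \Rightarrow (4)$.

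For that I would first isolate a monotonicity property of splitting exponents: \emph{if $q' \geq 2$ divides $q$ and $X$ is diagonally split at $q$, then $X$ is diagonally split at $q'$.} This is immediate from the polyhedral criterion recalled in the introduction. Since $q' \mid q$ we have $M \subseteq \tfrac{1}{q'}M \subseteq \tfrac{1}{q}M$, so $\tfrac{1}{q'}M/M$ is a subgroup of $\tfrac{1}{q}M/M$; in particular every coset $c + M$ with $c \in \tfrac{1}{q'}M$ is also one of the classes occurring in $\tfrac{1}{q}M/M$. If $X$ is diagonally split at $q$, then $\F_\Sigma$ contains a representative of each such coset, and that representative serves equally well as a representative of the corresponding class in $\tfrac{1}{q'}M/M$; hence $X$ is diagonally split at $q'$. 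Applying this with $q' = 2$ to an even $q$ at which $X$ is diagonally split yields $(3) \Rightarrow (4)$, closing the cycle.

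I do not expect a genuine obstacle here: the content of the corollary is supplied entirely by the two theorems, and the only additional ingredient is the elementary subgroup observation above, which uses nothing beyond the definition of $\F_\Sigma$. The one point worth stating carefully is that the monotonicity step really does use both hypotheses $q' \geq 2$ and $q' \mid q$; without divisibility there is no containment of the relevant finite groups, so diagonal splitting at $q$ does not in general force diagonal splitting at smaller integers, and the restriction in $(3)$ to \emph{even} $q$ is what makes $q' = 2$ available.
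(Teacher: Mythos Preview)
Your argument is correct and matches the paper's intent: the corollary is stated there without proof, simply as a consequence of Theorems~\ref{thm:unimodular} and \ref{thm:q=2}, and your cycle $(1)\Rightarrow(2)\Rightarrow(3)\Rightarrow(4)\Rightarrow(1)$ is the natural way to make this explicit. The only step the paper leaves implicit is precisely your $(3)\Rightarrow(4)$, and your monotonicity observation---that $q'\mid q$ forces $\frac{1}{q'}M/M\hookrightarrow\frac{1}{q}M/M$, so representatives in $\F_\Sigma$ for the larger group automatically cover the smaller---is the right way to fill it in.
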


We investigate cases where $X$ is not diagonally split at $q = 2$, using relaxations of the condition of unimodularity, as follows.  

Choose coordinates $N \cong \Z^n$, and let $A$ be the matrix whose columns are the vectors in $\Sigma(1)$.  Recall that $\Sigma(1)$ is unimodular if and only if every maximal nonsingular square submatrix of $A$ is invertible over $\Z$.  Following Appa and Kotnyek, we say that $\Sigma(1)$ is \emph{$k$-regular} if every maximal nonsingular square submatrix of $A$ is invertible over $\Z[\frac{1}{k}]$ \cite{AppaKotnyek04}.  Equivalently, $\Sigma(1)$ is $k$-regular if and only if, for any maximal independent subset $\{ v_1, \ldots, v_n \}$ of $\Sigma(1)$, the quotient $N/ \< v_1, \ldots, v_n\>$ is $k$-torsion. We say that $X$ is \emph{not diagonally split} if there is no $q$ such that $X$ is diagonally split at $q$.

\begin{thm} \label{thm:notsplit}
If $\Sigma(1)$ is not $2$-regular, then $X$ is not diagonally split.
\end{thm}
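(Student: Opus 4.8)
The plan is to apply the polyhedral criterion recalled above and show that for every integer $q \ge 2$ the open polytope $\F_\Sigma$ omits a representative of some class in $\tfrac1q M/M$. I would first reduce to prime $q$: if $p$ is a prime dividing $q$, then $\tfrac1p M/M$ is literally a subgroup of $\tfrac1q M/M$ (the same cosets of $M$), so a class of $\tfrac1p M/M$ with no representative in $\F_\Sigma$ is also one of $\tfrac1q M/M$. Then I would fix a maximal independent subset $\{v_1,\dots,v_n\} \subseteq \Sigma(1)$ — a basis of $N_\R$, since $\Sigma$ is complete — whose index $d := [N : \langle v_1,\dots,v_n\rangle]$ has an odd prime divisor $p'$; this is possible precisely because $\Sigma(1)$ is not $2$-regular. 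Since these $v_i$ lie in $\Sigma(1)$ we have $\F_\Sigma \subseteq P := \{u \in M_\R : |\langle u,v_i\rangle| < 1,\ i = 1,\dots,n\}$, and using the basis of $\langle v_1,\dots,v_n\rangle^\vee$ dual to $(v_i)$ as coordinates identifies $P$ with the open cube $(-1,1)^n$, identifies $M$ with a sublattice $L \subseteq \Z^n$ of index $d$, and matches classes of $\tfrac1p M/M$ with classes of $\tfrac1p L/L$. So the goal becomes: for each prime $p$, exhibit a class of $\tfrac1p L/L$ with no representative in $(-1,1)^n$; note that $x+L$ (with $x \in \tfrac1p L$) meets $(-1,1)^n$ iff $\|x - \ell\|_\infty < 1$ for some $\ell \in L$.

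If $p \mid d$, the argument is quick: $\Z^n/L$ has order divisible by $p$, so by Cauchy's theorem it contains an element of order $p$; lift it to $x \in \Z^n$, so $px \in L$ (hence $x \in \tfrac1p L$) but $x \notin L$. Any representative of $x+L$ in $(-1,1)^n$ would be an integer vector $x + \ell$ of sup-norm $< 1$, forcing $x = -\ell \in L$, a contradiction. So $x+L$ is the desired class. (In particular $X$ is not diagonally split at $p'$, nor at any $q$ divisible by a prime factor of $d$.)

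The main case is $p \nmid d$. Here $\gcd(d, p^n) = 1$, so reduction mod $p$ identifies the set of cosets of $pL$ in $L$ with $\Z^n/p\Z^n = \F_p^n$ (one checks $L \cap p\Z^n = pL$ and $L + p\Z^n = \Z^n$). For $\bar c \in \F_p^n$ with standard representative $(c_1,\dots,c_n) \in \{0,\dots,p-1\}^n$, the vectors of sup-norm $< p$ in the coset indexed by $\bar c$ are precisely those \emph{short lifts} of $\bar c$ — the integer vectors $y \equiv \bar c \pmod p$ with $\|y\|_\infty \le p-1$, namely $y_i = 0$ where $c_i = 0$ and $y_i \in \{c_i, c_i - p\}$ where $c_i \ne 0$ — that happen to lie in $L$. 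Since $p' \mid d$, there is a surjection $\psi = \langle a, \cdot\rangle \bmod p' : \Z^n \to \Z/p'$ with $L \subseteq \ker\psi$, so any integer vector with $\psi$-value $\not\equiv 0 \pmod{p'}$ lies outside $L$. I would now pick an index $j$ with $a_j \not\equiv 0 \pmod{p'}$ and a value $c_j \in \{1,\dots,p-1\}$ with $c_j \not\equiv 0$ and $c_j \not\equiv p \pmod{p'}$ — such a $c_j$ exists since $p' \ge 3$ and $p \not\equiv 0 \pmod{p'}$ (e.g.\ $c_j = 1$ works unless $p \equiv 1 \pmod{p'}$, in which case $p \ge 3$ and $c_j = 2$ works). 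Take $\bar c$ to be $c_j$ in coordinate $j$ and $0$ elsewhere; it is nonzero. Its two short lifts $c_j e_j$ and $(c_j - p)e_j$ have $\psi$-values $a_j c_j$ and $a_j(c_j - p)$, neither divisible by $p'$, so neither lies in $L$. Hence the coset of $pL$ in $L$ indexed by $\bar c$ contains no vector of sup-norm $< p$, and the corresponding class of $\tfrac1p L/L$ has no representative in $(-1,1)^n$.

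I expect the main obstacle to be this last case — specifically, the observations that $\gcd(p,d) = 1$ turns reduction mod $p$ into a bijection on cosets of $pL$ in $L$, that a vector of small sup-norm in such a coset is forced to be one of the finitely many short lifts, and that a single-coordinate $\bar c$ then already does the job (with the small primes $p$ and $p'$ dispatched by the elementary remark above). The rest — the reduction to prime $q$, the Cauchy argument, and the chain of coordinate identifications $\F_\Sigma \subseteq P \cong (-1,1)^n$, $M \cong L$ — is routine but should be written out carefully.
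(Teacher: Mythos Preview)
Your argument hinges on the claim that if $\Sigma(1)$ is not $2$-regular then some maximal independent subset has index $d$ divisible by an \emph{odd} prime $p'$. This is false under the paper's operative definition: $2$-regular means every quotient $N/\langle v_1,\dots,v_n\rangle$ is annihilated by~$2$, so ``not $2$-regular'' only guarantees a quotient containing an element of order~$\geq 3$. That element may have order~$4$, with $d$ a pure power of~$2$. (The phrase ``invertible over $\Z[\tfrac{1}{k}]$'' in the introduction is misleading; the condition actually used throughout, in particular in Proposition~\ref{prop:2regular} and in the proof of the theorem, is that the quotient is $k$-torsion, equivalently that $kB^{-1}$ is integral.) A concrete instance in dimension~$2$: take $\Sigma(1)=\{(1,0),\ (1,4),\ (-1,-2)\}$. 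The three pairwise indices are $4$, $2$, $2$, all powers of~$2$, yet the quotient for $\{(1,0),(1,4)\}$ is $\Z/4$, so $\Sigma(1)$ is not $2$-regular and Theorem~\ref{thm:notsplit} applies.

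In such cases your main step collapses: for an odd prime $p$ (necessarily coprime to $d$) you need a surjection $\psi\colon \Z^n \to \Z/p'$ vanishing on $L$ with $p'$ odd, but no odd $p'$ divides $d$. The paper's proof avoids this by putting the matrix $B$ in Hermite normal form and splitting into two cases via Proposition~\ref{prop:2regular}: either some diagonal entry $B_{jj}\geq 3$, or all diagonal entries of $B'$ equal~$2$ while $B'$ has an off-diagonal~$1$; the second case is exactly where your argument breaks. Your approach can in fact be salvaged by replacing the odd prime $p'$ with the modulus~$4$ (a non-elementary abelian $2$-group quotient surjects onto $\Z/4$): one then seeks $c_j\in\{1,\dots,p-1\}$ with $c_j\not\equiv 0,\,p\pmod 4$, and since here $p$ must be odd this is always possible. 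But as written, the proof is incomplete.
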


When $\Sigma(1)$ is $2$-regular, but not unimodular, the problem of determining the set of $q$ at which $X$ is diagonally split is more subtle.  In dimension 2, the solution is as simple and affirmative as possible.

\begin{thm} \label{thm:n=2}
If $\Sigma(1)$ is 2-regular and $\dim(X) = 2$, then $X$ is diagonally split at $q$, for all odd $q \geq 3$.
\end{thm}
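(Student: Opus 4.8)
The plan is to use the polyhedral criterion together with a classification of $2$-regular configurations in dimension $2$. By \cite[Theorem~1.2]{diagonalsplittings} it suffices to show that for every odd $q \geq 3$ one has $\tfrac1q M \subseteq \F_\Sigma + M$, i.e.\ that $\F_\Sigma$ meets every class of $\tfrac1q M/M$.

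First I would classify the possible $\Sigma(1)$. In dimension $2$, $\Sigma(1)$ is $2$-regular if and only if $|\det(v,v')| \leq 2$ for every pair of non-parallel rays $v, v' \in \Sigma(1)$, since $N/\langle v, v'\rangle$ is cyclic of order $|\det(v,v')|$ and is $2$-torsion exactly when that order divides $2$. If some pair of rays is a basis of $N$, choose coordinates with $e_1, e_2 \in \Sigma(1)$; then $|\det(v,e_i)| \leq 2$ forces both coordinates of every ray into $\{-2,\ldots,2\}$, and an elementary check of which primitive vectors with such entries are pairwise compatible shows $\Sigma(1)$ lies, after a change of coordinates, in $C_1 := \{\pm e_1, \pm e_2, \pm(1,1), \pm(2,1)\}$ or in $C_2 := \{\pm e_1, \pm e_2, \pm(1,1), \pm(1,-1)\}$. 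If no pair of rays is a basis, every pair of non-parallel rays has determinant $\pm 2$; such a pair is congruent modulo $2N$ (two distinct nonzero classes in $(\Z/2)^2$ have odd determinant), and no three directions can be pairwise congruent modulo $2N$ with pairwise determinant $2$, so $\Sigma(1) = \{\pm v, \pm v'\}$ with $|\det(v,v')| = 2$.

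Since enlarging $\Sigma(1)$ shrinks $\F_\Sigma$, it is enough to establish the conclusion for the maximal configurations $C_1$, $C_2$, and $\{\pm v, \pm v'\}$ with $|\det(v,v')|=2$. In each case $\overline{\F_\Sigma}$ is a parallelogram whose inward facet normals $w,w'$ satisfy $|\det(w,w')| = 2$: for $C_1$ the inequalities from $e_1$ and $(1,1)$ follow from those from $e_2$ and $(2,1)$, so $\F_{C_1} = \{\,|u_2| < 1,\ |2u_1 + u_2| < 1\,\}$; for $C_2$ the inequalities from $e_1, e_2$ follow from the others, so $\F_{C_2} = \{\,|u_1 + u_2| < 1,\ |u_1 - u_2| < 1\,\}$; and $\F_{\{\pm v, \pm v'\}} = \{\,|\langle u, v\rangle| < 1,\ |\langle u, v'\rangle| < 1\,\}$. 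A primitive pair $v, v'$ with $|\det(v,v')| = 2$ can be written $v' = v + 2z$ with $\{v, z\}$ a basis of $N$, so in suitable coordinates all three cases become the single model $P := \{\, u \in M_\R : |u_1| \leq 1,\ |u_1 + 2u_2| \leq 1 \,\}$.

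Everything thus reduces to: for odd $q \geq 3$, $\Int(P) + M \supseteq \tfrac1q M$. Given $w = (a/q,\, b/q)$, pick $s \in \Z$ with $|a/q + s| \leq (q-1)/(2q) < \tfrac12$ --- here oddness of $q$ is used, so that the nearest integer to $a/q$ lies at distance strictly less than $\tfrac12$ --- and set $u_1 = a/q + s$. As $u_2$ runs over $b/q + \Z$, the quantity $u_1 + 2u_2$ runs over the coset $(u_1 + 2b/q) + 2\Z$, which meets $(-1,1)$ unless $u_1 + 2b/q$ is an odd integer; this forces $q \mid (a + 2b)$. In that case: if $q \nmid a$, a second admissible choice of $s$ (differing by $1$, still with $|u_1| < 1$) makes $u_1 + 2b/q$ an even integer instead; if $q \mid a$, then $q \mid 2b$ hence $q \mid b$, so $u_1 = 0$ and $u_1 + 2b/q = 2(b/q)$ is even. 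Either way we obtain $(s,t)$ with $(u_1,\, b/q + t) \in \F_\Sigma$, i.e.\ $w - m \in \F_\Sigma$ for the corresponding $m \in M$. I expect the most delicate point to be completeness of the dimension-$2$ classification in the second paragraph --- that the case analysis of primitive vectors with entries in $\{-2,\ldots,2\}$ omits no maximal $2$-regular configuration --- while the congruence manipulation in the last step is routine once the two appeals to "$q$ odd" have been isolated.
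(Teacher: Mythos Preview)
Your proof is correct and follows the same overall architecture as the paper's: classify the $2$-regular configurations in dimension~$2$ up to $\GL_2(\Z)$, then reduce to a single tractable case. The details of the classification match (both enumerate primitive vectors with entries in $\{-2,\ldots,2\}$, check pairwise determinants, and handle separately the case where no pair of rays is a basis); in fact your $C_1$ and $C_2$ are $\GL_2(\Z)$-equivalent, so your list is slightly redundant but not wrong.

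The difference lies in the endpoint. The paper observes that every maximal configuration is \emph{binet} in suitable coordinates and then invokes Proposition~\ref{prop:binet}, i.e.\ the inclusion $(-\tfrac12,\tfrac12)^2 \subset \F_\Sigma$. You instead note that in each case $\F_\Sigma$ is (after a lattice change of coordinates) the \emph{same} open parallelogram $\Int(P)=\{|u_1|<1,\ |u_1+2u_2|<1\}$, and then verify directly that $\Int(P)$ meets every class of $\tfrac1qM/M$ for odd $q$ by a short congruence argument. That final step is essentially the $n=2$ instance of Proposition~\ref{prop:2n}. The paper's route is a bit shorter once Proposition~\ref{prop:binet} is available; your route has the pleasant feature of unifying all three cases into a single explicit model and is entirely self-contained. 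One small presentational point: in the case $q\mid a$ of your last paragraph, what you actually show is that the ``bad'' hypothesis $u_1+2b/q$ odd cannot occur, so the original choice of $s$ already works --- it may be worth saying this explicitly rather than phrasing it in parallel with the $q\nmid a$ case.
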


Combining Theorems \ref{thm:unimodular}, \ref{thm:q=2}, and \ref{thm:n=2} gives the following classification of possibilities for the set of $q$ at which $X$ is diagonally split, when $\dim(X) = 2$.  

\begin{cor}
If $\dim(X) =2$, then exactly one of the following holds:
\begin{enumerate}
\item $\Sigma(1)$ is unimodular and $X$ is diagonally split at all $q \geq 2$.
\item $\Sigma(1)$ is 2-regular but not unimodular, and $X$ is diagonally split at $q$ if and only if $q$ is odd.
\item $\Sigma(1)$ is not 2-regular, and $X$ is not diagonally split.
\end{enumerate}
\end{cor}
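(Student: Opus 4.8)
The plan is first to observe that the three cases constitute a genuine trichotomy for $\Sigma(1)$, and then to read off the splitting behavior in each case from the theorems above, so that the corollary becomes a matter of bookkeeping rather than a new argument. For the ``exactly one'' assertion, note that a matrix invertible over $\Z$ is automatically invertible over $\Z[\frac12]$, so every unimodular configuration is in particular $2$-regular. Hence any configuration $\Sigma(1)$ satisfies exactly one of: (i) unimodular; (ii) $2$-regular but not unimodular; (iii) not $2$-regular. I will show these correspond respectively to alternatives (1), (2), (3).

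In case (i), Theorem~\ref{thm:unimodular} immediately gives that $X$ is diagonally split at all $q \geq 2$, which is alternative (1). In case (iii), Theorem~\ref{thm:notsplit} immediately gives that $X$ is not diagonally split, which is alternative (3). So only case (ii) requires real work, and there only because it asks for both an existence and a non-existence statement.

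For case (ii): since $\dim(X) = 2$, every odd $q \geq 2$ is in fact $\geq 3$, so Theorem~\ref{thm:n=2} (whose hypotheses hold, as $\Sigma(1)$ is $2$-regular) shows that $X$ is diagonally split at $q$ for every odd $q \geq 2$. For the converse, suppose $X$ is diagonally split at some even $q$. By the Corollary following Theorem~\ref{thm:q=2}, being diagonally split at some even $q \geq 2$ is equivalent to unimodularity of $\Sigma(1)$; this contradicts the standing hypothesis of case (ii). Hence $X$ is not diagonally split at any even $q$, and combining with the previous sentence, $X$ is diagonally split at $q$ if and only if $q$ is odd, which is alternative (2).

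I do not expect a serious obstacle here: the only ingredient not already isolated in a cited result is the trivial remark that unimodular configurations are $2$-regular, which is exactly what makes the three alternatives exhaustive and mutually exclusive. The rest is assembling Theorems~\ref{thm:unimodular}, \ref{thm:notsplit}, \ref{thm:n=2}, and the Corollary following Theorem~\ref{thm:q=2} in the three cases.
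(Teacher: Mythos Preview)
Your proposal is correct and follows essentially the same approach as the paper: the paper offers no separate proof of this corollary, merely stating that it follows by combining Theorems~\ref{thm:unimodular}, \ref{thm:q=2}, and \ref{thm:n=2}, and your argument is exactly the bookkeeping that makes this combination explicit. The only cosmetic difference is that you invoke the first Corollary (the one equating unimodularity with splitting at some even $q$) in case (ii), whereas the paper names Theorem~\ref{thm:q=2} directly; since the first Corollary already packages Theorems~\ref{thm:unimodular} and \ref{thm:q=2} together with the elementary observation that splitting at an even $q$ forces splitting at $q=2$, this amounts to the same thing.
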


\noindent Similar results hold for other special classes of 2-regular vector configurations.  For instance, a matrix is \emph{binet} if the sum of the absolute values of the entries in each column is at most 2, and binet matrices are $2$-regular  \cite[Theorem~25]{AppaKotnyek04}.  If $\Sigma(1)$ is a binet configuration, i.e., the set of column vectors in a binet matrix, then $X$ is diagonally split at all odd $q \geq 3$.  See Proposition~\ref{prop:binet}.  However, there are examples in dimensions 4 and higher where $\Sigma(1)$ is 2-regular, but $X$ is not diagonally split.  See Example~\ref{ex:notsplit}.

In Section~\ref{sec:subdiagonals}, we give a characterization of splittings of $X^n$ that are compatible with certain unions of subdiagonals, correcting an error from \cite{diagonalsplittings}. The existence of such splittings has strong consequences, for example, that every ample line bundle gives rise to an embedding that is defined by quadratic equations, or even that every section ring of an ample line bundle is Koszul, see Section~\ref{sec:subdiagonals} for further details. 

\subsection*{Acknowledgements} 
Portions of this research were carried out during an REU project supported under NSF grant DMS-1001859. We thank Arend Bayer, who co-advised this REU project and gave fundamental intellectual input.  We also thank Christian Haase.   The work of MH was partially supported by EPSRC first grant EP/K041002/1.  The work of SP was partially supported by NSF CAREER DMS-1149054.

\section{The unimodular case}

We begin with a proof of Theorem~\ref{thm:unimodular}, showing that $X$ is diagonally split at all $q \geq 2$ when $\Sigma(1)$ is unimodular.

\begin{proof}[Proof of Theorem~\ref{thm:unimodular}]
Let $a$ be a vector in $\frac{1}{q}M$.  We must show that there is a representative of the class $[a]$ in $\frac{1}{q}M/M$ in the open polytope $\F_\Sigma$, defined in the introduction.

Consider the polytope $P$ in $M_\R$, given by
\[
P =  \{ u \ | \ \lfloor \< a, v\> \rfloor \leq \<u,v\> \leq \lceil \<a,v\> \rceil, \mbox{ for all } v \in \Sigma(1). \}
\]
Note that $P$ is not empty, because it contains $a$. Since $\Sigma(1)$ is unimodular, $P$ has integer vertices \cite[Theorem~19.3]{Schrijver86}, so we can choose $x \in P \cap M$.  Then $a - x \equiv a$ in $\frac{1}{q}M/M$, and $|\<a-x, v \>| < 1$ for all $v \in \Sigma(1)$.  Therefore, $a - x$ is a representative for $[a]$ in $\F_\Sigma$, and the theorem follows. 
\end{proof}

\section{Hermite normal form and 2-regularity} \label{sec:Hermite}

To investigate cases where $\Sigma(1)$ is not unimodular, we find it helpful to consider a matrix $B$ whose columns are a maximal independent set of vectors in $\Sigma(1)$, with respect to a preferred choice of ordering of the vectors and a preferred choice of coordinates $N \cong \Z^n$.  Given such a maximal independent set $\{v_1, \ldots, v_n\}$ we first order the vectors so that $N/\<v_1, \ldots, v_r\>$ is torsion free and $N/\<v_1, \ldots, v_r, v_s\>$ is not torsion free, for all $s > r$.  %Note that, since each $v_i \in \Sigma(1)$ is primitive, we can do so with $r > 1$.

After fixing such an ordering, we can choose coordinates so that the matrix $B$ with columns $v_1, \ldots, v_n$ is in \emph{Hermite normal form}, meaning that $B$ is upper diagonal, with nonnegative integer entries, and the entries above diagonal in each column are strictly smaller than the entry on the diagonal.

Due to our choice of ordering of the vectors, the first $r$ diagonal entries of $B$ are $1$, and the rest are greater than $1$.  In other words, $B$ is a nonnegative integer matrix in the normal form
\begin{equation}  \label{eq:Hermite}
 B = \begin{pmatrix}
I_r  & C \\
0 & B'\\
\end{pmatrix},
\end{equation}
where $I_r$ is the $r\times r$ identity matrix, $B'$ is an upper triangular $(d-r)\times (d-r)$ matrix with diagonal entries at least 2,  and $B_{ij} < B_{jj}$ for $i < j$. 

Note that $\Sigma(1)$ is unimodular if and only if, for every maximal independent set of vectors, the resulting matrix $B$ is unimodular.  Furthermore, a square nonsingular matrix $B$ is unimodular if and only if its normal form is the identity matrix $I_n$.  

As mentioned in the introduction, we follow the terminology of \cite{AppaKotnyek04} and say that $\Sigma(1)$ is $k$-regular if, for any maximal independent subset $\{ v_1, \ldots, v_n \}$, the quotient $N/ \< v_1, \ldots, v_n\>$ is $k$-torsion. Note that $\Sigma(1)$ is $k$-regular if and only if, for every maximal independent subset, the resulting matrix $B$ is $k$-regular, and a square nonsingular matrix $B$ is $k$-regular if and only if it is invertible over $\Z[\frac{1}{k}]$.

We have the following characterization of $2$-regularity for $B$ in terms of its normal form.

\begin{prop}  \label{prop:2regular}
The matrix $B$ is $2$-regular if and only if $B' = 2 I_{d-r}$.
\end{prop}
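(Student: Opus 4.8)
The plan is to make $2$-regularity of $B$ concrete and then exploit the block-triangular shape of the normal form~\eqref{eq:Hermite}. Here $B$ is $2$-regular precisely when $2B^{-1}$ has integer entries --- equivalently, when $N/\langle\text{columns of }B\rangle$ is annihilated by $2$, i.e.\ $2\Z^{d}\subseteq B\Z^{d}$. I would first use the block form of~\eqref{eq:Hermite} to pass from $B$ to its lower-right block $B'$, reducing the proposition to a statement about $B'$ alone, and then read the entries of $B'$ off the requirement that $2(B')^{-1}$ be integral, using the Hermite normalization.

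For the reduction, a direct computation of the inverse of the block-triangular matrix in~\eqref{eq:Hermite} gives
\[
B^{-1} = \begin{pmatrix} I_r & -C(B')^{-1} \\ 0 & (B')^{-1} \end{pmatrix}.
\]
Thus $2B^{-1}$ is block-triangular with identity-block part $2I_r$, lower-right block $2(B')^{-1}$, and upper-right block $-C\cdot 2(B')^{-1}$; since $C$ has integer entries, $2B^{-1}$ is integral if and only if $2(B')^{-1}$ is. The proposition therefore reduces to: an upper triangular matrix $B'$ in Hermite normal form with all diagonal entries at least $2$ has $2(B')^{-1}$ integral if and only if $B' = 2I_{d-r}$. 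One direction is trivial, since $B' = 2I_{d-r}$ gives $2(B')^{-1} = I_{d-r}$.

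For the converse, assume $2(B')^{-1}$ is integral. Since $B'$ is upper triangular, so is $(B')^{-1}$, with $i$-th diagonal entry $1/B'_{ii}$; as $B'_{ii}\ge 2$, integrality of $2/B'_{ii}$ forces $B'_{ii}=2$ for every $i$. Write $B' = 2I_{d-r}+U$ with $U$ strictly upper triangular; the Hermite condition $0\le B'_{ij}<B'_{jj}=2$ for $i<j$ shows every entry of $U$ is $0$ or $1$. Then $2(B')^{-1} = (I_{d-r}+\tfrac12 U)^{-1} = \sum_{m\ge 0}(-\tfrac12)^m U^m$, a finite sum since $U$ is nilpotent. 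Suppose $U\neq 0$ and pick an entry $U_{ij}=1$ with $\delta := j-i$ minimal among the nonzero entries of $U$. For $m\ge 2$ any nonzero term of $(U^m)_{ij}$ would be a product $U_{ik_1}U_{k_1k_2}\cdots U_{k_{m-1}j}$ with consecutive gaps $k_1-i,\ldots,j-k_{m-1}$ each at least $\delta$, forcing $\delta = j-i\ge m\delta$, which is impossible; so $(U^m)_{ij}=0$ for $m\ge 2$. Since also $(U^0)_{ij}=0$ and $(U^1)_{ij}=1$, the formula gives $(2(B')^{-1})_{ij} = -\tfrac12\notin\Z$, a contradiction. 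Hence $U=0$ and $B' = 2I_{d-r}$.

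The only step with real content is this last one --- forcing the off-diagonal entries of $B'$ to vanish --- and it is exactly here that the Hermite normalization is indispensable: without the bound $B'_{ij}<B'_{jj}$ an off-diagonal entry could be even and contribute nothing to the obstruction. The minimal-gap choice is the device that isolates a single surviving $-\tfrac12$ in $2(B')^{-1}$; the block-inverse computation and the diagonal-entry argument are routine.
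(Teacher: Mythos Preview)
Your proof is correct. The paper's route is a bit different and more lattice-theoretic: instead of computing $2B^{-1}$, it argues directly in the quotient $N/\langle v_1,\ldots,v_n\rangle$, noting first that the order of each $e_i$ there is divisible by $B_{ii}$ (forcing all $B'_{ii}=2$) and that $B'=2I_{d-r}$ makes the quotient visibly $2$-torsion. For the remaining case---all diagonal entries of $B'$ equal to $2$ but $B'$ not diagonal---the paper picks the \emph{first column} $j$ containing a nonzero off-diagonal entry of $B'$ and checks that $2e_j\neq 0$ in the quotient. You instead expand $2(B')^{-1}=(I+\tfrac12 U)^{-1}$ as a finite Neumann series and isolate a non-integer entry by choosing a nonzero entry of $U$ of \emph{minimal gap} $j-i$, which kills all higher powers of $U$ at that position. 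The two selections (first bad column versus minimal gap) are different devices serving the same purpose---preventing interference from other terms---and the underlying obstruction is the same, since $2e_j\notin B\Z^n$ is exactly the statement that the $j$th column of $2B^{-1}$ is not integral. The paper's argument is shorter and stays in the quotient picture; yours makes the obstruction completely explicit as the entry $-\tfrac12$.
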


\begin{proof}
Let $\{e_1, \ldots, e_n\}$ be the basis of $N$ bringing $B$ into the normal form (\ref{eq:Hermite}).  Note that the order of $e_i$ in $N/\<v_1, \ldots, v_n\>$ is divisible by the diagonal entry $B_{ii}$, with equality for all $i$ if $B'$ is diagonal.  In particular, if $B$ is $2$-regular, then every diagonal entry of $B'$ must be 2, and if $B'$ is $2I_{d-r}$, then $N/\<v_1, \ldots, v_n\>$ is generated by $2$-torsion elements, and hence $B$ is $2$-regular.

It remains to show that $B$ is not $2$-regular if every diagonal entry of $B'$ is $2$, but $B'$ is not diagonal.  To see this, choose $j>r$ such that $v_j$ is the first column vector of $B$ that contains a nonzero entry of $B'$ above the diagonal.  Then $2e_j$ is not zero in $N/\<v_1, \ldots, v_n\>$, and hence $B$ is not $2$-regular.
\end{proof}

It will also be useful to consider the intersection of $\F_\Sigma$ with coordinate subspaces compatible with the normal form of $B$, as follows.

Choose a basis $\{e_1, \ldots, e_n \}$ for $N$ with respect to which $B$ is in the normal form (\ref{eq:Hermite}), and let $\{f_1, \ldots, f_n\}$ be the dual basis for $M$.  Let $M_j$ be the sublattice of $M$ spanned by the basis vectors starting from $f_j$, so $M_j = \< f_j, \ldots, f_n\>$.

\begin{lem} \label{lem:support}
Fix $q \geq 2$, and let $[a] \in \frac{1}{q}M/M$ be the class of an element $a \in \frac{1}{q}M_j$.  If $[a]$ is represented by a point in $\F_\Sigma$, then it is represented by a point in $\F_\Sigma \cap \frac{1}{q}M_j$.
\end{lem}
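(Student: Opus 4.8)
The plan is to prove the slightly stronger statement that \emph{every} representative of $[a]$ that lies in $\F_\Sigma$ automatically lies in $\frac{1}{q}M_j$. So fix such a representative: a point $b\in\F_\Sigma$ with $b\equiv a\pmod M$, which, since $[a]\in\frac{1}{q}M/M$, lies in $\frac{1}{q}M$. Write $b=\sum_i b_i f_i$ with respect to the basis $\{f_1,\dots,f_n\}$ of $M$ dual to $\{e_1,\dots,e_n\}$. Because $a\in\frac{1}{q}M_j$, its first $j-1$ coordinates vanish, and since $b-a\in M=\bigoplus_i\Z f_i$ this forces $b_k\in\Z$ for every $k<j$. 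The whole argument reduces to showing these integers are all zero.

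First I would prove, by induction on $k=1,\dots,j-1$, that $b_k=0$. Assume $b_1=\dots=b_{k-1}=0$. The vector $v_k\in\Sigma(1)$ is the $k$-th column of the matrix $B$ in the Hermite normal form (\ref{eq:Hermite}), so $\langle b,v_k\rangle=\sum_i b_i B_{ik}$. The inductive hypothesis removes every term with $i<k$, and the fact that $B$ is upper triangular removes every term with $i>k$, leaving $\langle b,v_k\rangle=b_k B_{kk}$. Since $B_{kk}$ is a positive integer (it equals $1$ for $k\le r$ and is at least $2$ for $k>r$), the condition $b\in\F_\Sigma$ gives $|b_k|\le |b_k|\,B_{kk}=|\langle b,v_k\rangle|<1$; as $b_k\in\Z$, this forces $b_k=0$, completing the induction.

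It then remains to package the conclusion. Since $b_k=0$ for every $k<j$, and likewise $a_k=0$ for $k<j$, the element $m:=b-a\in M$ has vanishing first $j-1$ coordinates, i.e.\ $m\in M_j$; hence $b=a+m\in\frac{1}{q}M_j+M_j=\frac{1}{q}M_j$, which is the desired representative. I do not expect a genuine obstacle here. The only point requiring care is keeping track of which coordinates of a representative are forced to be \emph{integral} — precisely those indexed below $j$, because $a\in\frac{1}{q}M_j$ — as opposed to those merely lying in $\frac{1}{q}\Z$: the argument works exactly by playing that integrality against the strict inequalities defining $\F_\Sigma$ together with the positivity of the diagonal entries of $B$ guaranteed by its Hermite normal form (\ref{eq:Hermite}).
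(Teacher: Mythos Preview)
Your proof is correct and follows essentially the same approach as the paper's: writing an arbitrary representative as $a+u$ with $u\in M$, then using induction on $k<j$ together with the upper-triangular shape of $B$ to show $\langle a+u,v_k\rangle=B_{kk}u_k$, whence the strict inequality forces the integer $u_k$ to vanish. The paper's argument is slightly terser but identical in substance; your observation that this actually shows \emph{every} representative in $\F_\Sigma$ lies in $\frac{1}{q}M_j$ is implicit in the paper's proof as well.
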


\begin{proof}
Let $a + u$ be a representative for $[a]$ in $\F_\Sigma$, so $u$ is a lattice point in $M$.  Write $u = (u_1, \ldots, u_n)$, with respect to the basis $\{f_1, \ldots, f_n\}$.

If $j = 1$, then there is nothing to show.  Assume $j \geq 2$.  Then $\<a + u, v_1\> = u_1$.  Since $u_1$ is an integer and $a + u$ is in $\F_\Sigma$, it follows that $u_1$ is zero.  Similarly, by an induction on $i$, we conclude that $\<a + u, v_i \> = B_{ii} u_i$, and hence $u_i = 0$, for $i < j$.  This shows that $u \in M_j$, as required.
\end{proof}

\section{A converse theorem for $q = 2$.}

We have already shown that if $\Sigma(1)$ is unimodular then $X$ is diagonally split for all $q \geq 2$.  We now prove the converse for $q = 2$.

\begin{proof}[Proof of Theorem~\ref{thm:q=2}]
Suppose $\Sigma(1)$ is not unimodular.  We must show that $X$ is not diagonally split at $q = 2$.

Choose a maximal independent subset $\{v_1, \ldots, v_n\}$ that does not generate $N$.  Reorder the vectors and choose a basis $\{e_1, \ldots, e_n\}$ on $N$ so that the matrix $B$ whose columns are $v_1, \ldots, v_n$ is in the normal form (\ref{eq:Hermite}).  Let $\{f_1, \ldots, f_n\}$ be the dual basis for $M$.  By Lemma~\ref{lem:support}, if the class $[f_n/2]$ in $\frac{1}{2}M/M$ is represented in $\F_\Sigma$, then it is represented by an odd multiple of $f_n/2$.  But this is impossible, since $\<f_n , v_n \> = B_{nn}$, which is at least $2$.
\end{proof}

\section{Configurations that are not 2-regular}

We now consider the case where $\Sigma(1)$ is not $2$-regular, and prove Theorem~\ref{thm:notsplit}, showing that $X$ is not diagonally split in this case.

\begin{proof}[Proof of Theorem~\ref{thm:notsplit}]
Choose a maximal independent set $\{v_1, \ldots, v_n \}$ in $\Sigma(1)$, such that $N/ \<v_1, \ldots, v_n\>$ is not $2$-torsion.  After reordering the vectors and choosing coordinates on $N$, we may assume that the matrix $B$ with columns $v_1, \ldots, v_n$ is in the normal form (\ref{eq:Hermite}).  By Proposition~\ref{prop:2regular}, either $B$ has a diagonal entry that is greater than 2, or the lower right square matrix $B'$ is not diagonal.

Suppose the diagonal entry $B_{jj}$ is greater than 2.  Let $\{e_1, \ldots, e_n\}$ be the chosen basis for $N$, with $\{f_1, \ldots, f_n\}$ the dual basis for $M$, and fix $a = \frac{1}{q}\lfloor \frac{q}{2}\rfloor f_j$.  By Lemma~\ref{lem:support}, if the class $[a] \in \frac{1}{q}M/M$ is represented by a point in $\F_\Sigma$, then it is represented by a point in $F_\Sigma \cap \frac{1}{q}M_j$.  Any such point $u$ is of the form $a + a_j f_j + \cdots + a_n f_n$, for some integers $a_j, \ldots, a_n$, and hence $\<u, v_j\>$ is in the set $ B_{jj} (\frac{1}{q}\lfloor \frac{q}{2}\rfloor + \Z)$.  Since $\frac{1}{q}\lfloor \frac{q}{2} \rfloor$ is in the interval $[\frac{1}{3}, \frac{1}{2}]$ and $B_{jj} \geq 3$, it follows that $|\<u,v_j\> |\geq 1$, and hence $u$ is not in $\F_\Sigma$.

It remains to consider the case where all diagonal entries of $B'$ are 2, and $B'$ is not diagonal.  Choose $j$ as small as possible so that $B'$ contains a nonzero off-diagonal entry in the $j$th column of $B$, and choose $i$ as small as possible, with $j$ fixed, so that $B_{ij}$ is such an entry.

Let $a = \frac{1}{q}(f_{i}+\lfloor \frac{q}{2} \rfloor f_{j})$, and suppose the class $[a] \in \frac{1}{q}M/M$ is represented by a point $u$ in $\F_\Sigma$.  By Lemma~\ref{lem:support}, any such point $u$ is of the form $a + a_i f_i + \cdots + a_n f_n$, for some integers $a_i, \ldots, a_n$.  Note that $\<u, v_i\> = 2(\frac{1}{q} + a_i)$, so $|\<u,v_i\>| < 1$ implies that $a_i = 0$.  Similarly, $\<u,v_\ell\> = 2 a_\ell$, and hence $a_\ell = 0$, for $i < \ell < j$.  We then compute that $\<u, v_j\> = \frac{1}{q} + \frac{2}{q}\lfloor \frac{q}{2} \rfloor + 2 a_j$.  By Theorem~\ref{thm:q=2}, we may assume that $q$ is odd, and conclude that $\<u,v_j\> = 1 + 2a_j$.  In particular, $|\<u,v_j\>|$ cannot be less than 1, so $u$ is not in $\F_\Sigma$, and hence $X$ is not diagonally split at $q$.
\end{proof}

\section{Partial results in the 2-regular case}

In this section, we investigate possibilities for the set of $q$ at which $X$ is diagonally split when $\Sigma(1)$ is 2-regular but not unimodular.  We show that if $\Sigma(1)$ is as small as possible, in the sense that it is contained in a basis for $N_\R$ and its negatives, or the dimension $n$ is 2, or when $\Sigma(1)$ forms the columns of a binet matrix, then $X$ is diagonally split for all odd $q$.  Example~\ref{ex:notsplit} shows that, in dimensions 4 and higher, $X$ may not be diagonally split, even though $\Sigma(1)$ is $2$-regular.

We first consider the case where $\Sigma(1)$ is as small as possible.

\begin{prop}
\label{prop:2n}
Suppose $\Sigma(1)$ is $2$-regular and contained in $\{\pm v_1, \ldots, \pm v_n\}$, where $\{v_1, \ldots, v_n\}$ is a basis for $N_\R$.  Then $X$ is diagonally split at $q$, for all odd $q \geq 3$.
\end{prop}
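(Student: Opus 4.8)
The plan is to verify the polyhedral criterion of \cite[Theorem~1.2]{diagonalsplittings}: for each odd $q \geq 3$ and each class in $\frac1q M / M$, I will exhibit a representative lying in $\F_\Sigma$, built by a suitable choice of rounding in coordinates adapted to the Hermite normal form of Section~\ref{sec:Hermite}.

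First I would reduce to a concrete normal form. Since $\Sigma$ is complete, $\Sigma(1)$ positively spans $N_\R$; combined with $\Sigma(1) \subseteq \{\pm v_1, \ldots, \pm v_n\}$ and the fact that $v_1, \ldots, v_n$ is a basis of $N_\R$, this forces $\Sigma(1) = \{\pm v_1, \ldots, \pm v_n\}$, so $\F_\Sigma = \{u \in M_\R : |\<u, v_i\>| < 1 \text{ for } i = 1, \ldots, n\}$. Now $\{v_1, \ldots, v_n\}$ is a maximal independent subset of $\Sigma(1)$; after reordering it and choosing coordinates on $N$, I may assume the matrix $B$ with these columns is in the normal form (\ref{eq:Hermite}), and since $\Sigma(1)$ is $2$-regular, Proposition~\ref{prop:2regular} gives that its lower-right block is $2 I_{n-r}$ while the upper-right block $C$ has all entries in $\{0,1\}$ (by the Hermite inequalities $0 \leq B_{ij} < B_{jj} = 2$). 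With $\{f_1, \ldots, f_n\}$ the dual basis of $M$, this yields $\<f_k, v_j\> = \delta_{kj}$ for $j \leq r$, and for $j > r$, $\<f_j, v_j\> = 2$ and $\<f_k, v_j\> = C_{kj}$ for $k \leq r$, all remaining pairings being zero.

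The heart of the argument is the construction of the representative. Given $a = \sum_k a_k f_k \in \frac1q M$, I will produce $z = \sum_k z_k f_k \in M$ with $a - z \in \F_\Sigma$. For $k \leq r$, choose $z_k \in \Z$ so that $\epsilon_k := a_k - z_k$ satisfies $|\epsilon_k| < 1$ and $q \epsilon_k \in 2\Z$: if $a_k \in \Z$ take $z_k = a_k$; otherwise both the nearest-integer choice and its shift by one give $|\epsilon_k| < 1$, and since the two resulting values of $q\epsilon_k$ are integers differing by the odd number $q$, exactly one is even, so take the corresponding $z_k$. Then $|\<a-z, v_j\>| = |\epsilon_j| < 1$ for $j \leq r$. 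For $j > r$, put $\sigma_j = \sum_{k \leq r} C_{kj}\epsilon_k$; since each $q\epsilon_k$ is even and $C_{kj} \in \{0,1\}$ we get $q\sigma_j \in 2\Z$, so writing $\sigma_j = 2 c_j/q$ and $a_j = b_j/q$ with $b_j, c_j \in \Z$, we find $\<a-z, v_j\> = \sigma_j + 2(a_j - z_j) = \frac{2(c_j+b_j)}{q} - 2 z_j$, which runs through an arithmetic progression of common difference $2$ as $z_j$ ranges over $\Z$. If $q \nmid c_j + b_j$, then since $q$ is odd no term of this progression is an integer, and any progression of common difference $2$ with no integer term meets the open interval $(-1,1)$ in exactly one point; choose $z_j$ accordingly. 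If $q \mid c_j + b_j$, then $\frac{2(c_j+b_j)}{q}$ is even and some $z_j$ gives $\<a-z, v_j\> = 0$. In all cases $|\<a-z, v_j\>| < 1$, so $a - z \in \F_\Sigma$ represents $[a]$, completing the proof.

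The one delicate step — and the only place oddness of $q$ is used — is the choice of $z_k$ for $k \leq r$. Rounding each $a_k$ to its nearest integer need not work: it can force $\<a-z, v_j\>$ to be an odd integer for some $j > r$ (this already occurs in dimension $2$ when $B$ has columns $(1,0)$ and $(1,2)$, with $q = 3$ and $a = \frac13(f_1 + f_2)$). The fix is to perturb the rounding in the first $r$ coordinates so that $q\epsilon_k$ is even, which is possible exactly because $q$ is odd; once this is done, the remaining coordinates $z_{r+1}, \ldots, z_n$ can be chosen independently of one another, as above.
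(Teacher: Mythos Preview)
Your proof is correct and follows essentially the same strategy as the paper's: put $B$ in the normal form (\ref{eq:Hermite}), use Proposition~\ref{prop:2regular} to get $B'=2I_{n-r}$, choose representatives in the first $r$ coordinates so that $q\epsilon_k$ is even (the key use of $q$ odd), and then adjust the last $n-r$ coordinates independently using that $\langle a-z,v_j\rangle\in\frac{2}{q}\Z$ avoids odd integers. The paper states this slightly more compactly by choosing all $n$ coordinates at once with $qa_i$ even, but the content is identical; your added justification that completeness forces $\Sigma(1)=\{\pm v_1,\ldots,\pm v_n\}$ (so that $\{v_1,\ldots,v_n\}$ is genuinely a maximal independent subset of $\Sigma(1)$ and Proposition~\ref{prop:2regular} applies) makes explicit a point the paper leaves implicit.
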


\begin{proof}
After permuting the vectors, we can choose coordinates $N \cong \Z^n$ such that the matrix $B$ whose column vectors are $v_1, \ldots, v_n$ is in the normal form (\ref{eq:Hermite}).  Since $\Sigma(1)$ is $2$-regular, the matrix $B'$ is $2I_{n-r}$, by Proposition~\ref{prop:2regular}.

Assume $q$ is odd.  Then we can represent any class in $\frac{1}{q}\Z^n/\Z^n$ uniquely by a vector $a = (a_1, \ldots, a_n)$ where $a_i \in (-1,1)$ and $q a_i$ is even.  Then $|\<a, v_j\>| < 1$ for $1 \leq j \leq r$.  For $j > r$, we have $\<a, v_j \> \in \frac{2}{q} \Z$.  In particular, $\<a, v_j\>$ is not an odd integer, so there is a unique integer $u_j$ such that $|\<a,v_j\> -2u_j| < 1$.  Then
\[
a + (0, \ldots, 0, u_{r+1}, \ldots, u_n)
\]
is in $\F_\Sigma$ and represents the class $[a] \in \frac{1}{q}M/M$, so $X$ is diagonally split at $q$.
\end{proof}

A matrix is called \emph{binet} if the sum of the absolute values of the entries in each column is at most 2, and binet matrices are $2$-regular \cite[Theorem~25]{AppaKotnyek04}.  We say that $\Sigma(1)$ is binet if there is a choice of coordinates $N \cong \Z^n$ such that the matrix whose columns are the vectors in $\Sigma(1)$ is binet.

\begin{prop} \label{prop:binet}
If $\Sigma(1)$ is binet, then $X$ is diagonally split at $q$, for all odd $q \geq 3$.
\end{prop}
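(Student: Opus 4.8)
The plan is to exhibit, for each class in $\frac{1}{q}M/M$, an explicit representative lying in $\F_\Sigma$; the point is that when $q$ is odd one can choose a representative all of whose coordinates are strictly smaller than $\tfrac12$ in absolute value, and such a vector automatically pairs to something in $(-1,1)$ with every column of a binet matrix. So I would first fix an odd $q \geq 3$ and a choice of coordinates $N \cong \Z^n$ for which the matrix $A$ with columns $\Sigma(1)$ is binet, meaning $\sum_i |v_i| \leq 2$ for every $v = (v_1, \ldots, v_n) \in \Sigma(1)$. Let $\{f_1, \ldots, f_n\}$ be the dual basis of $M$.

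The key elementary observation is that, since $q$ is odd, every class in $\frac{1}{q}\Z/\Z$ has a representative of absolute value at most $\frac{q-1}{2q} < \tfrac12$ -- concretely, the numbers $\frac{k}{q}$ with $|k| \leq \frac{q-1}{2}$ already run over all $q$ classes. Applying this in each coordinate, an arbitrary class $[a] \in \frac{1}{q}M/M$ has a representative $u = \sum_i u_i f_i$ with $u_i \in \frac{1}{q}\Z$ and $|u_i| < \tfrac12$ for all $i$. Then for any $v \in \Sigma(1)$ the binet condition gives
\[
|\langle u, v \rangle| \;\leq\; \sum_i |v_i|\,|u_i| \;<\; \tfrac12\sum_i |v_i| \;\leq\; 1,
\]
so $u \in \F_\Sigma$ and represents $[a]$. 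Hence $\F_\Sigma$ meets every class in $\frac{1}{q}M/M$, and $X$ is diagonally split at $q$ by the criterion of \cite[Theorem~1.2]{diagonalsplittings} recalled in the introduction.

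I do not expect a genuine obstacle here: the argument uses only the oddness of $q$ (which is exactly what makes the small representatives exist, and which genuinely fails for even $q$) together with the $\ell_1$-bound on the columns, so unlike Proposition~\ref{prop:2n} it needs neither the normal form of Section~\ref{sec:Hermite} nor an explicit appeal to $2$-regularity, and primitivity of the generators plays no role. The only points worth stating carefully are the coordinatewise reduction to representatives of absolute value $< \tfrac12$ and the fact that $\F_\Sigma$ is open, so that the strict inequalities above are precisely what is required.
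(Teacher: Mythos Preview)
Your proof is correct and follows essentially the same approach as the paper: choose binet coordinates, take representatives with each coordinate of absolute value less than $\tfrac12$ (possible since $q$ is odd), and bound the pairing with each $v\in\Sigma(1)$ by the $\ell_1$-norm of $v$. The paper phrases the middle step geometrically---$\F_\Sigma$ contains the open cube $(-\tfrac12,\tfrac12)^n$---but this is exactly your inequality $|\langle u,v\rangle| \leq \sum_i |v_i|\,|u_i| < \tfrac12\sum_i|v_i| \leq 1$.
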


\begin{proof}
Choose coordinates $N \cong \Z^n$ so that the sum of the absolute values of the coordinates of each vector in $\Sigma(1)$ is at most 2.  Then $\F_\Sigma$ contains the open cube with vertices $(\pm\frac{1}{2}, \ldots, \pm \frac{1}{2})$, and hence contains all points of the form $(\frac{a_1}{q}, \ldots, \frac{a_n}{q})$ where $|a_i| < \frac{q}{2}$.  These represent all equivalence classes in $\frac{1}{q}M/M$ when $q$ is odd, and the proposition follows. 
\end{proof}

Next, we consider the case where the dimension is as small as possible.

\begin{proof}[Proof of Theorem~\ref{thm:n=2}]
Suppose the dimension $n$ is $2$, and $\Sigma(1)$ is $2$-regular.  We will show that $X$ is diagonally split at all odd $q$, by classifying the possibilities for $\Sigma(1)$ and showing that they are all binet.

We first consider the cases where $\Sigma(1)$ does not contain a basis for $N$.  Fix two independent vectors in $\Sigma(1)$, and choose coordinates so that the corresponding matrix is in Hermite normal form.  In these coordinates, the two vectors must be $(1,0)$ and $(1,2)$.  Any other vector $v$ that is not equal to these two or their negations must be of the form $(a, \pm 2)$.  The condition that $v$ and $(1,2)$ generate a sublattice of index 2 guarantees that $a$ is even, which is impossible, since $v$ must be primitive.  We conclude that $\Sigma(1) \subset \{ \pm (1,0), \pm (1,2) \}$, which is binet with respect to the basis $\{(1,1), (0,1)\}$.

Therefore, we may assume $\Sigma(1)$ contains a basis for $N$, and take these as coordinates.  Since $\F_\Sigma$ is cut out by absolute values of pairings with vectors in $\Sigma(1)$, we may restrict attention to configurations of vectors whose first nonzero coordinate is positive.  Then $2$-regularity implies that the remaining vectors are a subset of $$\{(1,1), (1,-1), (1,2), (1,-2), (2,1), (2,-1)\}.$$  Furthermore, $\Sigma(1)$ can contain at most one vector from $S = \{(1,2), (1,-2), (2,1), (2,-1)\}$, since the quotient of $\Z^2$ by any two of these contains $3$-torsion or $4$-torsion.  Similarly, if $\Sigma(1)$ contains $(1,1)$ and $(1,-1)$, then it cannot contain any vector from $S$, since each vector with $S$, together with either $(1,1)$ or $(1,-1)$ generates an index 3 sublattice.  If $\Sigma(1)$ does not contain any vector from $S$, then it is binet in the given coordinates.  

We therefore assume $\Sigma(1)$ contains exactly one element of $S$.  After permuting the coordinates, we may suppose $S$ contains $(1,2)$ or $(1,-2)$.  Since adding vectors can only diminish the set of $q$ at which $X$ is diagonally split, we may assume $\Sigma(1)$ contains either $(1,1)$ or $(1,-1)$, as well.  This leaves two cases, namely $\Sigma(1) = \{ (1,0), (0,1), (1,1), (1,2) \}$ and $\Sigma(1) = \{ (1,0), (0,1), (1,-1), (1,-2) \}$, and both differ only by a change of coordinates from $\{ (1,0), (0,1), (1,1), (1,-1) \}$.  We conclude that $\Sigma(1)$ is binet and hence $X$ is diagonally split at all odd $q \geq 3$.
\end{proof}

The following is a 4-dimensional example where $\Sigma(1)$ is $2$-regular, but $X$ is not diagonally split.  We do not know whether such examples exist in dimension $3$.

\begin{ex}  \label{ex:notsplit}
Consider a fan $\Sigma$ in $\R^4$ whose $1$-dimensional cones are generated by
$$\Sigma(1)=\{e_1, e_2, e_3 + e_4, e_3- e_4, e_1 + e_2 - e_3, e_1-e_2 + e_3, -e_1 + e_2 + e_3 \}.$$  We claim that $X$ is not diagonally split. Note that $\Sigma(1)$ is not unimodular so, by Theorem~\ref{thm:q=2}, $X$ is not diagonally split at $q$ when $q$ is even.  It remains to check that $X$ is not diagonally split at any odd $q \geq 3$.  Note that every maximal independent subset generates either $\Z^4$ or a sublattice of index $2$, so $\Sigma(1)$ is $2$-regular, and Theorem~\ref{thm:notsplit} does not apply.  Nevertheless, we verify that $X$ is not diagonally split at any odd $q$ as follows.

Let $q \geq 3$ be odd.  We claim that the class of $a = \frac{1}{q}(\lfloor \frac{q}{2} \rfloor (e_1 + e_2) - e_3 + e_4)$ in $\frac{1}{q}\Z^4/ \Z^4$ is not represented by any point in $\F_\Sigma$.   To see this, suppose that $a + (a_1, a_2, a_3, a_4)$ is in $\F_\Sigma$, for some integers $a_1, \ldots, a_4$.  Pairing with $e_3 + e_4$ and $e_3 - e_4$ shows that $a_3$ and $a_4$ must vanish.  Pairing with $e_1$ and $e_2$ then shows that $a_1$ and $a_2$ are each either $0$ or $-1$, and pairing with the remaining vectors eliminates these four possibilities.
\end{ex}

\section{Compatibly split subdiagonals} \label{sec:subdiagonals}
 
As mentioned in the introduction, there is significant geometric interest in knowing whether a variety $X$ is diagonally split because every ample line bundle on a diagonally split variety is very ample and gives rise to a projectively normal embedding.  There are similar reasons for interest in compatible splittings of unions of subdiagonals in higher products of $X$.  Indeed, if $X \times X \times X$ is split compatibly with the union of $\Delta \times X$ and $X \times \Delta$ then every ample line bundle on $X$ gives rise to an embedding that is normally presented, i.e., the homogeneous ideal of the image is generated by quadrics.  Linearity of subsequent steps in the minimal free resolution of the ground field 
over the homogeneous coordinate ring of the image are guaranteed by splittings of $X^n$ compatible with the union of the higher subdiagonals
\[
\Delta_i = X^{i-1} \times \Delta \times X^{n-i-1},
\]
for $1 \leq i < n$.  In particular, the homogeneous coordinate ring is Koszul if $X^n$ is split compatibly with $\Delta_1 \cup \cdots \cup \Delta_{n-1}$, for all $n$.

Payne mistakenly stated that if a toric variety $X$ is diagonally split, then $X^n$ is split compatibly with $\Delta_1 \cup \cdots \cup \Delta_{n-1}$ for all $n$ \cite[Theorem~1.3]{diagonalsplittings}.  The error in the proof occurs in the middle of the second paragraph, with the false assertion that a certain explicit splitting $\pi$ is compatible with $\Delta_1 \cup \cdots \cup \Delta_{n-1}$.  Indeed, whenever $n \geq 3$, the splitting described there fails to satisfy the necessary and sufficient condition for compatibility with $\Delta_1 \cup \cdots \cup \Delta_{n-1}$ given in Theorem~\ref{thm:compatiblewithDelta}.

\begin{remark}
Note that the error in \cite[Theorem~1.3]{diagonalsplittings} also affects \cite[Theorems~1.1 and 1.3]{rootpolytopes}.  The main arguments in the latter paper show correctly that if $\Sigma(1)$ is contained in a root system of type $A$, $B$, $C$, or $D$ then the toric variety $X$ is diagonally split at $q$ for all odd $q \geq 3$.  It follows that any lattice polytope whose facet normals is in one of these root systems is normal, as are Cayley sums of polytopes whose Minkowski sum is such a polytope.  However, it does not follow that these polytopes are Koszul.  The Koszulness of lattice polytopes whose facet normals are contained in a root system of type $A$ is known, by \cite{BrunsGubeladzeTrung97}.  The corresponding statement for root systems of type $B$, $C$, or $D$ is an open problem.
\end{remark}

The following is an example of a diagonally split toric variety $X$ such that $X \times X \times X$ is not split compatibly with the union of $\Delta \times X$ and $X \times \Delta$.

\begin{ex}\label{ex:Birkhoff}
The Birkhoff polytope $B_n$ is the convex hull of the $n \times n$ permutation matrices.  It is a lattice polytope of dimension $n^2 - 2n + 1$ cut out by inequalities coming from a totally unimodular matrix \cite[\S19]{Schrijver86}.  In particular, if $X(\Sigma_n)$ is the toric variety corresponding to $B_n$ then $\Sigma_n(1)$ is unimodular, and hence $X(\Sigma_n)$ is diagonally split at $q$, for all $q \geq 2$, by Theorem~\ref{thm:unimodular}.

However, as noted by Haase and Paffenholz \cite{HaasePaffenholz09}, for $n =3$, the polytope $B_3$ corresponds to an embedding of $X(\Sigma_3)$ as the cubic hypersurface $x_0 x_1 x_2 = x_3 x_4 x_5$ in the projective space $\mathbf{P}^5$.  Since the homogeneous ideal of this embedding is not generated by quadrics, it follows that $X \times X \times X$ has no splitting compatible with $(\Delta \times X) \cup (X \times \Delta)$.
\end{ex}

In the remainder of this section, we characterize the splittings of $X^n$ that are compatible with $\Delta_1 \cup \cdots \cup \Delta_{n-1}$.  Recall that multiplication by an integer $q \geq 2$ on $N_{\bR}$ preserves the fan $\Sigma$ and hence induces a morphism $F  \colon X\to X$, which agrees with the absolute Frobenius morphism when $q$ is prime and $k$ is the field with $q$ elements.  

A \emph{splitting} of $X$ at $q$ is an $\mathcal{O}_X$-module map $\pi: F_*\mathcal{O}_X \rightarrow \mathcal{O}_X$ such that the composition $\pi \circ F_*$ is the identity on $\mathcal{O}_X$.  Such a splitting is compatible with a subvariety $Z \subset X$ if $\pi$ maps $F_*(I_Z)$ into $I_Z$, and $X$ is \emph{diagonally split} if there is a splitting of $X \times X$ compatible with the diagonal $\Delta$.

Following \cite[Section~2]{diagonalsplittings}, we recall that the global sections of $F_*\mathcal{O}_T$ are generated by monomials $x^b$ for $b \in \frac{1}{q}M$, and a basis for $\Hom(F_* \mathcal{O}_T, \mathcal{O}_T)$ is given by maps $\pi_a$ for $a \in \frac{1}{q}M$, where
\[
\pi_a (x^b) = \left\{
\begin{array}{ll} x^{a+b} & \mbox{ if } a+b \in M, \\
			 0 & \mbox{ otherwise,}
			 \end{array}
			 \right.
\]
and $\sum c_a \pi_a$ extends to a map from $F_*\mathcal{O}_X$ to $\mathcal{O}_X$ if and only if $\supp(\pi) = \{ a \ | \ c_a \neq 0\}$ is contained in the open polytope
\[
P_{-K}^\circ = \{ u \in M_\R \ | \ \<u, v\> < 1 \mbox{ \ for all \ } v \in \Sigma(1)\}.
\]
Each map from $F_*\mathcal{O}_X$ to $\mathcal{O}_X$ is determined by its restriction to $F_* \mathcal{O}_T$, so $\Hom(F_*\mathcal{O}_X, \mathcal{O}_X)$ has a basis given by $P_{-K}^\circ \cap \frac{1}{q}M$.  Furthermore, a map $\sum c_a \pi_a$ is a splitting if and only if $c_0 = 1$.  It follows that the space of splittings compatible with a given subscheme is an affine hyperplane in $\Hom(F_*\mathcal{O}_X, \mathcal{O}_X)$.

The main result of \cite{diagonalsplittings} says that $X \times X$ is diagonally split if and only if $\F_\Sigma$, which is equal to $P_{-K}^\circ \cap -P_{-K}^\circ$, contains representatives of every equivalence class in $\frac{1}{q}M/M$.  It generalises as follows. 

Let $L \subset M^n$ be the sublattice of tuples $(u_1, \ldots, u_n)$ such that $u_1 + \cdots + u_n = 0$.  Note that $L = L_1 \oplus \cdots \oplus L_{n-1}$, where $L_i \subset L$ is the sublattice where $u_j = 0$ for $j \not \in \{ i, i+1 \}$.

\begin{thm} \label{thm:compatiblewithDelta}
A splitting of $X^n$ given by $$ \pi = \sum_{a \in \frac{1}{q} M^n} c_a \pi_a$$ is compatible with $\Delta_1\cup \cdots \cup \Delta_{n-1}$ if and only if, for any $b$ and $b'$ in $\frac{1}{q}M^n$ such that $b \equiv b'$ mod $\frac{1}{q}L_i$, and any class $[u] \in M^n/L_i$, we have
\[
\sum_{\substack {a + b \, \in \, M^n \\ (a + b) \, \mathrm{mod} \, L_i \, = \, [u]}} c_a = \sum_{\substack {a' + b' \, \in \, M^n \\ (a' + b') \, \mathrm{mod} \, L_i \, = \, [u]}} c_{a'},
\]
for $1 \leq i \leq n-1$.
\end{thm}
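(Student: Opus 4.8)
The plan is to reduce the statement about compatibility with the union $\Delta_1 \cup \cdots \cup \Delta_{n-1}$ to a collection of statements, one for each $\Delta_i$, and then to unwind the ideal-theoretic condition $\pi(F_*(I_{\Delta_i})) \subseteq I_{\Delta_i}$ into the combinatorial equality displayed in the theorem. First I would recall from the setup that it suffices to work on the open torus $T^n \subset X^n$, since every map $F_* \mathcal{O}_{X^n} \to \mathcal{O}_{X^n}$ is determined by its restriction to $F_* \mathcal{O}_{T^n}$, and that $\pi$ maps $F_*(I_{\Delta_i})$ into $I_{\Delta_i}$ if and only if the corresponding restricted map sends the torus ideal $I_{\Delta_i \cap T^n}$ into itself. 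The torus $T^n$ has character lattice $M^n$, the subtorus $\Delta_i \cap T^n$ is the kernel of the quotient map $M^n \to M^n / L_i$ (here $L_i$ is the rank-$n$ sublattice picking out the $i$-th diagonal), so $I_{\Delta_i \cap T^n}$ is spanned by differences $x^{b} - x^{b'}$ with $b \equiv b' \bmod L_i$, and more to the point as an $\mathcal{O}_{T^n}$-module it is the span of all $x^c$ together with the relations identifying characters that agree modulo $L_i$ — concretely, $\mathcal{O}_{T^n}/I_{\Delta_i \cap T^n}$ has basis the monomials indexed by $M^n/L_i$.

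Next I would compute, for a single $\Delta_i$, when $\pi = \sum_a c_a \pi_a$ preserves $I_{\Delta_i \cap T^n}$. The cleanest route is to pass to the quotient: $\pi$ descends to a well-defined map $\overline{\pi}$ on $F_*(\mathcal{O}_{T^n}/I_{\Delta_i \cap T^n}) \to \mathcal{O}_{T^n}/I_{\Delta_i \cap T^n}$ if and only if it respects the equivalence, i.e. if and only if for $b \equiv b' \bmod \tfrac{1}{q}L_i$ in $\tfrac{1}{q}M^n$ the images $\pi(x^b)$ and $\pi(x^{b'})$ agree in $\mathcal{O}_{T^n}/I_{\Delta_i \cap T^n}$. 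Now $\pi(x^b) = \sum_{a : a+b \in M^n} c_a\, x^{a+b}$, and in the quotient ring $x^{a+b}$ is determined by the class $(a+b) \bmod L_i \in M^n/L_i$; collecting terms, the coefficient of the basis monomial indexed by $[u] \in M^n/L_i$ in $\pi(x^b)$ is precisely $\sum_{a+b \in M^n,\ (a+b)\bmod L_i = [u]} c_a$. Setting these coefficients equal for $b$ and $b'$ gives exactly the displayed identity. I would also note the easy direction separately if needed: if the identity holds for all $i$, all $b \equiv b' \bmod \tfrac1q L_i$, and all $[u]$, then $\pi$ maps each generator $x^b - x^{b'}$ of $I_{\Delta_i \cap T^n}$ into $I_{\Delta_i \cap T^n}$, hence preserves the ideal; and a map preserving each $I_{\Delta_i}$ for $1 \le i \le n-1$ preserves their sum $I_{\Delta_1} + \cdots + I_{\Delta_{n-1}}$, which is the ideal of the scheme-theoretic union $\Delta_1 \cup \cdots \cup \Delta_{n-1}$ (the relevant toric subschemes are reduced, so the set-theoretic union carries the sum ideal).

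The one genuinely delicate point — and the place I expect to spend the most care — is the bookkeeping with fractional lattices: $\pi_a$ is only nonzero on $x^b$ when $a + b \in M^n$, so the sums in the identity range over a coset-dependent subset of the support of $\pi$, and one must check that the condition "$b \equiv b' \bmod \tfrac{1}{q}L_i$" is exactly what makes the two index sets on the two sides biject compatibly (translation by $b' - b \in \tfrac1q L_i$ carries $\{a : a+b\in M^n\}$ to $\{a' : a'+b'\in M^n\}$ and shifts the class in $M^n/L_i$ by a fixed amount, so after reindexing by $[u]$ the two sides match up). I would make this explicit once, carefully, and then the rest is formal. A secondary subtlety worth a sentence is that it is enough to impose the condition for $b, b'$ ranging over $\tfrac1q M^n$ rather than over all of $M^n$, because $I_{\Delta_i \cap T^n}$ as an $\mathcal{O}_{T^n}$-module is generated by differences of such monomials (indeed $\pi$ is $\mathcal{O}_{X^n}$-linear, so it suffices to check compatibility on module generators), which is why the statement only quantifies over $\tfrac1q M^n$.
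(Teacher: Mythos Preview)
Your computation for a single $\Delta_i$ is correct and matches the paper's proof exactly: restrict to the torus, note that $F_*I_{\Delta_i \cap T^n}$ is generated by the differences $x^b - x^{b'}$ with $b \equiv b' \bmod \tfrac{1}{q}L_i$, and read off the coefficient of $x^{[u]}$ in $\pi(x^b)|_{\Delta_i \cap T^n}$. (The ``delicate bookkeeping'' paragraph about bijecting the two index sets is unnecessary and somewhat confused: the two displayed sums are simply the coefficients of $x^{[u]}$ in $\pi(x^b)$ and $\pi(x^{b'})$ after restriction, and their equality is the \emph{condition} being characterized, not a structural fact to be verified.)

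The genuine gap is in the reduction from the union to the individual $\Delta_i$. First, a slip: the ideal of a union of closed subschemes is the \emph{intersection} $I_{\Delta_1} \cap \cdots \cap I_{\Delta_{n-1}}$, not the sum; the sum of the ideals cuts out the scheme-theoretic intersection. Your stated direction happens to survive this error, since a map preserving each $I_{\Delta_i}$ also preserves their intersection. But the converse---that compatibility with $\Delta_1 \cup \cdots \cup \Delta_{n-1}$ forces compatibility with each $\Delta_i$ separately---is false for an arbitrary $\mathcal{O}$-linear map (a map can preserve $I_1 \cap I_2$ without preserving either $I_1$ or $I_2$), and you give no argument for it. It is precisely here that one must use that $\pi$ is a \emph{splitting}: the paper invokes the standard result (Brion--Kumar, Proposition~1.2.1) that a Frobenius splitting compatible with a reduced closed subscheme is automatically compatible with each of its irreducible components. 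Since the $\Delta_i$ are the irreducible components of their union, this supplies the missing ``only if'' implication. Without this step your argument establishes only one direction of the theorem.
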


\begin{remark}
If $X$ is diagonally split, then for all $n$ and all $i$, $\Delta_i$ is compatibly split in $X^n$. However, the Birkhoff polytope 
Example~\ref{ex:Birkhoff} shows that 
there need not be a splitting that is simultaneously compatible for all $i$. 
\end{remark}

\begin{remark}
Note that $0$ is the only lattice point in 
$(P_{-K}^\circ)^n$. Thus, setting $[u]=0$ and $b=0$ in Theorem~\ref{thm:compatiblewithDelta} shows that a necessary condition for the existence of such a splitting is that  
$(P_{-K}^\circ)^n$ contains a representative of every equivalence 
class of $\frac{1}{q}L_i/L_i$. In particular, when $n=2$, $P_{-K}^\circ
\times P_{-K}^\circ$, must contain a representative of every equivalence class of $\frac{1}{q}L/L$, which is equivalent to the condition in
\cite[Theorem 1.2]{diagonalsplittings}.
\end{remark}

\begin{proof}[Proof of Theorem~\ref{thm:compatiblewithDelta}]
A splitting is compatible with $\Delta_1 \cup \cdots \cup \Delta_{n-1}$ if and only if it is compatible with $\Delta_i$, for each $1 \leq i \leq n-1$ \cite[Proposition~1.2.1]{BrionKumar05}.  Therefore, it will suffice to show that $\pi$ is compatible with $\Delta_i$ if and only if, for any $b$ and $b'$ in $\frac{1}{q}M^n$ such that $b \equiv b'$ mod $\frac{1}{q}L_i$, and any $[u] \in M^n/L_i$, we have
\[
\sum_{\substack {a + b \, \in \, M^n \\ (a + b) \, \mathrm{mod} \, L_i \, = \, [u]}} c_a = \sum_{\substack {a' + b' \, \in \, M^n \\ (a' + b') \, \mathrm{mod} \, L_i \, = \, [u]}} c_{a'}.
\]

The splitting $\pi$ is compatible with $\Delta_i$ if and only if the restriction of $\pi$ to $T^n$ is compatible with $\Delta_i \cap T^n$ \cite[Lemma~1.1.7]{BrionKumar05}.  The coordinate ring of $T^n$ is generated by monomials $x^u$ for $u \in M^n$, and the ideal $I$ defining $\Delta_i \cap T^n$ is generated by differences $x^{u} - x^{u'}$, for $u, u' \in M^n$  such that $u \equiv u' \ \mathrm{mod} \ L_i$.

It follows that $F_*I$ is generated by differences $x^{b} - x^{b'}$, for $b, b' \in \frac{1}{q}M^n$  such that $b \equiv b' \ \mathrm{mod} \ \frac{1}{q}L_i$, and $\pi$ is compatible with $\Delta_i$ if and only if it maps each of these generators to a function that vanishes on $\Delta_i \cap T^n$.  The coordinate ring of $\Delta_i \cap T^n$ is the group ring of $M^n / L_i$, so the restriction of $\pi(x^b - x^{b'})$ to $\Delta_i \cap T^n$ can be expressed uniquely as a linear combination of monomials $x^{[u]}$ for $[u] \in M^n/L_i$.  Since restriction from the coordinate ring of $T^n$ is induced by the projection $M^n \rightarrow M^n / L_i$, the coefficient of $x^{[u]}$ in $\pi(x^b)$ is
\[
\sum_{\substack {a + b \, \in \, M^n \\ (a + b) \, \mathrm{mod} \, L_i \, = \, [u]}} c_a,
\]
and the theorem follows.
\end{proof}

\bibliographystyle{alpha}
\bibliography{math}

\begin{thebibliography}{Pay09b}

\bibitem[AK04]{AppaKotnyek04}
G.~Appa and B.~Kotnyek.
\newblock Rational and integral {$k$}-regular matrices.
\newblock {\em Discrete Math.}, 275(1-3):1--15, 2004.

\bibitem[BGT97]{BrunsGubeladzeTrung97}
W.~Bruns, J.~Gubeladze, and N.~Trung.
\newblock Normal polytopes, triangulations, and {K}oszul algebras.
\newblock {\em J. Reine Angew. Math.}, 485:123--160, 1997.

\bibitem[BK05]{BrionKumar05}
M.~Brion and S.~Kumar.
\newblock {\em Frobenius splitting methods in geometry and representation
  theory}, volume 231 of {\em Progress in Mathematics}.
\newblock Birkh\"auser Boston Inc., Boston, MA, 2005.

\bibitem[HP09]{HaasePaffenholz09}
C.~Haase and A.~Paffenholz.
\newblock Quadratic {G}r\"obner bases for smooth {$3\times3$} transportation
  polytopes.
\newblock {\em J. Algebraic Combin.}, 30(4):477--489, 2009.

\bibitem[Pay09a]{diagonalsplittings}
S.~Payne.
\newblock Frobenius splittings of toric varieties.
\newblock {\em Algebra Number Theory}, 3(1):107--119, 2009.

\bibitem[Pay09b]{rootpolytopes}
S.~Payne.
\newblock Lattice polytopes cut out by root systems and the {K}oszul property.
\newblock {\em Adv. Math.}, 220(3):926--935, 2009.

\bibitem[Sch86]{Schrijver86}
A.~Schrijver.
\newblock {\em Theory of linear and integer programming}.
\newblock Wiley-Interscience Series in Discrete Mathematics. John Wiley \&
  Sons, Ltd., Chichester, 1986.
\newblock A Wiley-Interscience Publication.

\bibitem[Smi00]{Smith00}
K.~Smith.
\newblock Globally {F}-regular varieties: {A}pplications to vanishing theorems
  for quotients of {F}ano varieties.
\newblock {\em Michigan Math. J.}, 48:553--572, 2000.

\end{thebibliography}

\end{document}